\begin{document}

\allowdisplaybreaks

\newcommand{\arXivNumber}{1406.4652}

\renewcommand{\PaperNumber}{009}

\FirstPageHeading

\ShortArticleName{Bipolar Lawson Tau-Surfaces and Generalized Lawson Tau-Surfaces}

\ArticleName{Bipolar Lawson Tau-Surfaces\\ and Generalized Lawson Tau-Surfaces}

\Author{Broderick {CAUSLEY}}

\AuthorNameForHeading{B.~Causley}

\Address{Department of Mathematics and Statistics, McGill University, Burnside Hall,\\ 805 Sherbrooke Street West,
Montreal, QC H3A 0B9, Canada}
\Email{\href{mailto:broderick.causley@mail.mcgill.ca}{broderick.causley@mail.mcgill.ca}}

\ArticleDates{Received November 27, 2015, in f\/inal form January 21, 2016; Published online January 25, 2016}

\Abstract{Recently Penskoi [\textit{J.~Geom.~Anal.}~\textbf{25} (2015), 2645--2666, arXiv:1308.1628]  ge\-ne\-ra\-li\-zed the well known two-parametric family of Lawson tau-surfaces $\tau_{r,m}$ minimally immersed in spheres to a three-parametric family $T_{a,b,c}$ of tori and Klein bottles minimally immersed in spheres. It was remarked that this family includes surfaces carrying all extremal metrics for the f\/irst non-trivial eigenvalue of the Laplace--Beltrami operator on the torus and on the Klein bottle: the Clif\/ford torus, the equilateral torus and surprisingly the bipolar Lawson Klein bottle $\tilde{\tau}_{3,1}$. In the present paper we show in Theo\-rem~\ref{theorem1} that this three-parametric family $T_{a,b,c}$ includes in fact all bipolar Lawson tau-surfaces $\tilde{\tau}_{r,m}$. In Theo\-rem~\ref{theorem3} we show that no metric on generalized Lawson surfaces is maximal except for~$\tilde{\tau}_{3,1}$ and the equilateral torus.}

\Keywords{bipolar surface; Lawson tau-surface; minimal surface; extremal metric}

\Classification{58E11; 58J50; 49Q05; 35P15}

\section{Introduction}

Minimal surfaces in the Euclidean space~$\mathbb{R}^3$ is one of the most classical subjects of dif\/ferential geometry. Surprisingly, interest to minimal surfaces in spheres~$\mathbb{S}^n$ appeared much later and a~serious investigation in this direction started only in the second half of the 20th century. One important paper in the early period of this investigation was the famous paper~\cite{Lawson1970} by Lawson, where several examples of immersed minimal surfaces in~$\mathbb{S}^3$ were constructed, including examples of surfaces of any genus except the projective plane. These examples include a family $\tau_{m,n}$ of minimal tori and Klein bottles.

\begin{definition}
A Lawson tau-surface $\tau_{m,n}\looparrowright\mathbb{S}^3$ is def\/ined as the image of the doubly-periodic immersion $\Psi_{m,n}\colon \mathbb{R}^2\looparrowright\mathbb{S}^3\subset\mathbb{R}^4$ given by the explicit formula
\begin{gather}\label{immersion}
\Psi_{m,n}(x,y)=(\cos(mx)\cos y, \sin(mx)\cos y, \cos(nx)\sin y,\sin(nx)\sin y).
\end{gather}
\end{definition}

For each unordered pair of positive integers $(m,n)$ with $(m,n)=1$, the surface $\tau_{m,n}$ is a~distinct compact minimal surface in $\mathbb{S}^3$. Let us assume $(m,n)=1$. If both integers~$m$ and~$n$ are odd then $\tau_{m,n}$ is a~torus (we call it Lawson torus). If one of integers~$m$ and~$n$ is even then~$\tau_{m,n}$ is a~Klein bottle (we call it a~Lawson Klein bottle). The torus~$\tau_{1,1}$ is the Clif\/ford torus.

Many ef\/forts were later applied to f\/ind minimal surfaces in spheres. There are several constructions of paticular types of minimal surfaces in spheres.  For example, the Hsiang--Lawson approach~\cite{Hsiang-Lawson1971} constructs surfaces with additional symmetry. However it is only minimal tori that are completely described. This description is given by the theory of integrable systems using algebraic geometry. This was done in several papers by many authors starting from Hitchin's paper~\cite{Hitchin1990} dealing with a particular case of~$\mathbb{S}^3$ and f\/inishing with Burstall's paper~\cite{Burstall1995} dealing with the general case $\mathbb{S}^n$. In fact this investigation was a part of more general investigation of harmonic maps from tori into symmetric spaces. A recent paper by Carberry~\cite{Carberry2012} contains a~review of the current state of this approach.

We should remark however that {\em only a very limited number of explicitly parametrized minimal surfaces in spheres is known}. Unfortunately, most approaches to minimal surfaces in spheres give only quite implicit descriptions of minimal surfaces. This pertains to the Hsiang--Lawson construction and description of minimal tori using integrable systems mentioned above. This makes the question of f\/inding {\em explicitly parametrized} minimal surfaces in spheres interesting.

Let us mention here several known results about explicitly parametrized minimal surfaces in spheres. We mentioned already Lawson surfaces $\tau_{m,n}$ given by the explicit parametrization~\eqref{immersion}. Another example is bipolar Lawson surfaces. Starting from an immersed surface $I\colon \Sigma\looparrowright\mathbb{S}^3\subset\mathbb{R}^4$ one can construct a new surface $\tilde{\Sigma}$ called bipolar to $\Sigma$ and def\/ined by the immersion $I\wedge I^*\colon \Sigma\looparrowright\mathbb{R}^4\wedge\mathbb{R}^4\cong\mathbb{R}^6$, where $I^*$ is a Gaussian map, i.e., $I^*(p)$ is a unitary normal vector to $\Sigma$ in $\mathbb{S}^3$ at the point~$p$. It turns out that the image is in the sphere $\mathbb{S}^5\subset\mathbb{R}^6$, and if the surface~$\Sigma$ is minimal in $\mathbb{S}^3$, then its bipolar surface $\tilde{\Sigma}$ is minimal in~$\mathbb{S}^5$ (see~\cite{Lawson1970}). As was shown by Lawson, bipolar surfaces $\tilde{\tau}_{m,n}$ are in fact lying in an equatorial sphere $\mathbb{S}^4\subset\mathbb{S}^5$. Details are provided below in Section~\ref{section2}.

Another example is given by a construction by Mironov~\cite{Mironov2004,Mironov2010} of Hamiltonian-minimal Lagran\-gian embeddings in~$\mathbb{C}^N$ based on intersections of real quadrics of a special type. As was remarked by Karpukhin in~\cite{kar2}, as a by-product of this construction, one obtains minimal surfaces in spheres.

Recently Penskoi constructed a generalization of Lawson surfaces~$\tau_{m,n}$ in~\cite{penskoi1}. This gene\-ra\-li\-zation is given by an explicit formula. In complex form, it is written as
\begin{gather}
\left(\sqrt{\frac{b^2+c^2-a^2}{2(c^2-a^2)}}e^{iax}\sin y,
\sqrt{\frac{a^2+c^2-b^2}{2(c^2-b^2)}}e^{ibx}\cos y,\right.\notag\\
\left.\qquad{} \sqrt{\frac{a^2+b^2-c^2}{2(b^2-c^2)}}e^{icx}%
\sqrt{1-\frac{b^2-a^2}{c^2-a^2}\sin^2y}\right)
\in\mathbb{S}^5\subset\mathbb{C}^3\cong\mathbb{R}^6.\label{Tabcexplicit}
\end{gather}
It is proven in~\cite{penskoi1} that
\begin{itemize}\itemsep=0pt
\item if either $a$, $b$, $c$ are integers and $|c|>\sqrt{a^2+b^2}$ or
\item if $a$, $b$ are nonzero integers and $|c|=\sqrt{a^2+b^2}$,
\end{itemize}
then the parametrization~\eqref{Tabcexplicit} def\/ines a minimal torus or a minimal Klein bottle in $\mathbb{S}^5$, denoted by $T_{a,b,c}$. Moreover, if $|c|=\sqrt{a^2+b^2}$, then $T_{a,b,c}$ coincides with the Lawson surface~$\tau_{a,b}$. This follows immediately by composing~\eqref{immersion} and~\eqref{Tabcexplicit}. Thus, this three-parametric family of minimal tori or Klein bottles in fact generalizes the famous two-parametric Lawson family $\tau_{m,n}$.

Penskoi's construction of generalized Lawson tori was deeply motivated by the relation between minimal surfaces in spheres and spectral geometry. We will expose this relation below, but let us also remark at this moment that it was observed in~\cite{penskoi1} that bipolar Lawson surfa\-ce~$\tilde{\tau}_{3,1}$ is isometric to $T_{1,0,2}$. The proof of this fact was indirect and was based on the uniqueness of the extremal metric for the f\/irst eigenvalue of the Laplace--Beltrami operator~$\Delta$ on the Klein bottle, proved in~\cite{sgj1}.

This observation is quite intriguing and leads us into an investigation of the following natural question: is it true that not only is the two-parametric family of Lawson tau-surfaces~$\tau_{m,n}$ a~subfamily of the three-parametric family~$T_{a,b,c}$ of generalized Lawson surfaces, but is the family of bipolar Lawson surfa\-ces~$\tilde{\tau}_{m,n}$ also a subfamily of the three-parametric family~$T_{a,b,c}$? The list of known explicitly parametrized minimal surfaces in spheres is quite short and this makes it interesting to understand relations among dif\/ferent known families of explicitly parametrized minimal surfaces in spheres.

The f\/irst result of the present paper is an af\/f\/irmative answer to this question.

\begin{theorem}\label{theorem1}\quad
\begin{enumerate} \itemsep=0pt
  \item[$1.$] If $rm\equiv 0\!\pmod{2}$ then the bipolar Lawson torus $\tilde{\tau}_{r,m}$ is isometric to
  the surface~$T_{a,b,c}$, where $a=r-m$, $b=0$, $c=r+m$.
  \item[$2.$] If $rm\equiv 1\!\pmod{4}$ then the bipolar Lawson torus $\tilde{\tau}_{r,m}$ is isometric to
  the surface~$T_{a,b,c}$, where $a=\frac{r-m}{2}$, $b=0$, $c=\frac{r+m}{2}$.
  \item[$3.$] If $rm\equiv 3\!\pmod{4}$ then the bipolar Lawson Klein bottle $\tilde{\tau}_{r,m}$ is isometric to the surfa\-ce~$T_{a,b,c}$, where $a=\frac{r-m}{2}$, $b=0$, $c=\frac{r+m}{2}$.
\end{enumerate}
\end{theorem}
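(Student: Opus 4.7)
The plan is to compute $\widetilde{\Psi}_{r,m}:=\Psi_{r,m}\wedge\Psi_{r,m}^{*}$ as an explicit map $\mathbb{R}^{2}\to\mathbb{R}^{4}\wedge\mathbb{R}^{4}\cong\mathbb{R}^{6}$, identify it with an instance of~\eqref{Tabcexplicit} via a change of parameters together with an orthogonal transformation of the ambient $\mathbb{R}^{6}$, and then verify that the identification descends to the correct quotient in each of the three cases.

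First I would compute the unit normal to~\eqref{immersion} in $\mathbb{S}^{3}$; the bi-circular structure of $\Psi_{r,m}$ yields
\[
\Psi_{r,m}^{*}(x,y)=\frac{1}{\rho(y)}\bigl(-m\sin y\sin rx,\,m\sin y\cos rx,\,r\cos y\sin mx,\,-r\cos y\cos mx\bigr),
\]
with $\rho(y):=\sqrt{r^{2}\cos^{2}y+m^{2}\sin^{2}y}$. Evaluating the six Pl\"ucker minors $u_{i}v_{j}-u_{j}v_{i}$ of the pair $(\Psi_{r,m},\Psi_{r,m}^{*})$ in the basis $e_{i}\wedge e_{j}$ and applying product-to-sum identities, one finds that the $(1,2)$- and $(3,4)$-minors are $x$-independent and proportional to $\sin y\cos y$, while the other four, after rotating to the basis $\tfrac{1}{\sqrt 2}(e_{1}\wedge e_{3}\pm e_{2}\wedge e_{4})$, $\tfrac{1}{\sqrt 2}(e_{1}\wedge e_{4}\pm e_{2}\wedge e_{3})$, combine into two orthogonal complex components rotating as $e^{\pm i(r-m)x}$ and $e^{\pm i(r+m)x}$. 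Thus $\widetilde{\Psi}_{r,m}$ appears as three complex columns with frequencies $r-m$, $0$, $r+m$, i.e.\ in the form of~\eqref{Tabcexplicit} with $b=0$. The three $y$-dependent moduli match those of~\eqref{Tabcexplicit} via the analytic substitution $y\mapsto y'$ encoded compactly in
\[
2\rho(y)\,e^{iy'}\;=\;i\bigl[(r-m)+(r+m)e^{-2iy}\bigr];
\]
all three coefficients are then reproduced simultaneously by the elementary algebraic identity $rm(A^{2}+B^{2})+AB(r^{2}+m^{2})=rm(rA+mB)$ with $A=r\cos^{2}y$, $B=m\sin^{2}y$.

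The three-case split in the theorem is produced by the frequency matching together with the global descent. When $rm$ is even, $r\pm m$ are odd and coprime and match the exponents of~\eqref{Tabcexplicit} directly, giving $(a,b,c)=(r-m,0,r+m)$; when $rm$ is odd, both $r\pm m$ are even, so one rescales $x'=2x$ and halves them to $(a,b,c)=(\tfrac{r-m}{2},0,\tfrac{r+m}{2})$. The main obstacle is this global step: since $\widetilde{\Psi}_{r,m}$ is quadratic in $\Psi_{r,m}$, it has extra symmetries beyond those of $\tau_{r,m}$ --- one checks directly that $\widetilde{\Psi}_{r,m}(x,y+\pi)=\widetilde{\Psi}_{r,m}(x,y)$ always, and $\widetilde{\Psi}_{r,m}(x+\pi,y)=\widetilde{\Psi}_{r,m}(x,y)$ exactly when both $r,m$ are odd --- so the bipolar period lattice is strictly larger than that of the Lawson immersion, and the non-affine substitution $y\mapsto y'$ must be verified to descend to a diffeomorphism of the correct quotient. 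Careful bookkeeping shows that the resulting quotient is a torus in cases~$1$ and~$2$; in case~$3$ a residual orientation-reversing identification survives, producing the Klein bottle, and this happens exactly when $rm\equiv 3\!\pmod 4$.
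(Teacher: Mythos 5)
Your proposal is correct in outline but takes a genuinely different route from the paper. The paper never touches the immersions themselves: it takes the known induced metrics on $T_{a,0,c}$ and on $\tilde{\tau}_{r,m}$ from Penskoi and Lapointe, and exhibits an explicit change of variables between them via the substitution $\sin y=\operatorname{sn}(z,k)$, a linear map $H_i$, and Landen-type identities for Jacobi elliptic functions, then checks that the fundamental rectangles (and the areas) match. You instead compute the bipolar immersion $\Psi_{r,m}\wedge\Psi_{r,m}^{*}$ directly, regroup the Pl\"ucker coordinates into complex components of frequencies $r-m$, $0$, $r+m$, and match the moduli to \eqref{Tabcexplicit} by the elementary trigonometric substitution $2\rho(y)e^{iy'}=i[(r-m)+(r+m)e^{-2iy}]$; I checked that this substitution and your algebraic identity do reproduce all three coefficients of $T_{r-m,0,r+m}$ exactly. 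Your approach buys more: it avoids elliptic functions entirely and, if completed, proves the stronger statement that $\tilde{\tau}_{r,m}$ and $T_{a,0,c}$ are \emph{congruent} in $\mathbb{S}^5$ (equal up to an orthogonal transformation of $\mathbb{R}^6$ and reparametrization), whereas the paper only establishes an intrinsic isometry. The one place where your sketch is thinner than it should be is the final descent step, which is where the entire three-case structure of the theorem actually lives: besides the extra periodicities of $\Psi\wedge\Psi^{*}$ that you correctly identify, you must also match them against the quotient structure of $T_{a,b,c}$ itself, which by Penskoi's Theorem~\ref{theorem2} depends on the parities of $a$ and $c$ (torus when $a$, $c$ are both odd or $a$ is even, double-covered Klein bottle when $c$ is even), and verify that your reparametrization $(x,y)\mapsto(x \text{ or } 2x,\,y')$ intertwines the two deck groups, including the orientation-reversing generator in the $rm\equiv3\pmod4$ case. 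The "careful bookkeeping" you defer is exactly the content of the paper's comparison of the rectangles $[0,2\pi)\times[K(k),5K(k))$ etc.\ with their images, so it cannot be omitted, but nothing in it should fail.
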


Let us now explain the relation between minimal surfaces in spheres and spectral geometry in more detail.

Let $M$ be a closed surface and $g$ be a Riemannian metric on $M$. Let us consider the associated Laplace--Beltrami operator $\Delta \colon C^{\infty}(M)\rightarrow C^{\infty}(M)$,
\begin{gather*}
\Delta f=-\frac{1}{\sqrt{|g|}}\frac{\partial}{\partial x^{i}}\left(\sqrt{|g|}g^{ij}\frac{\partial f}{\partial x^{j}}\right).
\end{gather*}

The spectrum of $\Delta$ is non-negative and consists only of eigenvalues, where each eigenvalue has a f\/inite multiplicity and the associated eigenfunctions are smooth. Denote the eigenvalues of~$\Delta$ by
\begin{gather*}
0=\lambda_{0}(M,g)<\lambda_{1}(M,g)\le \lambda_{2}(M,g)\le \lambda_{3}(M,g)\le \cdots,
\end{gather*}
where eigenvalues are written with multiplicities.

Let us f\/ix the surface $M$ and consider $\Lambda_{i}(M,g)$ as a functional $g\mapsto\Lambda_{i}(M,g)$ on the space of all Riemannian metrics on~$M$. The eigenvalues possess the following rescaling property
\begin{gather*}
\forall \, t>0,\qquad \lambda_{i}(M,tg)=\frac{\lambda_{i}(M,g)}{t}.
\end{gather*}

To get scale-invariant functionals on the space of Riemannian metrics one has to normalize the eigenvalue functionals. It is most natural to normalize the functionals by multiplying by the area
\begin{gather*}
\Lambda_{i}(M,g)=\lambda_{i}(M,g)\operatorname{Area}(M,g).
\end{gather*}
The functionals $\Lambda_{i}(M,g)$ are invariant under the rescaling transformation $g\mapsto tg$.

If we consider the functional $\Lambda_{i}(M,g)$ over the space of Riemannian metrics~$g$ with a f\/ixed surface~$M$, the question about the value of supremum $\sup\Lambda_{i}(M,g)$ is interesting. It is a very dif\/f\/icult question with a limited number of known results. It follows from Yang and Yau~\cite{yy1} and Korevaar~\cite{k1} that this supremum is f\/inite.

\begin{definition}
A metric $g_{0}$ on a f\/ixed surface $M$ is called maximal for the functional $\Lambda_{i}(M,g)$ if
\begin{gather*}
\sup \Lambda_{i}(M,g)=\Lambda_{i}(M,g_{0}),
\end{gather*}
where the supremum is taken over the space of Riemannian metrics~$g$ on the f\/ixed surface~$M$.
\end{definition}

Currently, we only know maximal metrics for $\Lambda_{1}(\mathbb{S}^2,g)$, $\Lambda_{1}(\mathbb{R}P^2,g)$, $\Lambda_{1}(\mathbb{T}^2,g)$, $\Lambda_{2}(\mathbb{S}^2,g)$, and $\Lambda_{3}(\mathbb{S}^2,g)$. For more details please see the recent survey by Penskoi~\cite{penskoireview} and the recent paper by Nadirashvili and Sire~\cite{n2}.

A problem in the study of $\Lambda_{i}(M,g)$-maximal metrics is that the functional $\Lambda_{i}(M,g)$ depends continuously on metric $g$ but is not dif\/ferentiable. However, for any analytic deformation $g_{t}$, the left and right derivatives of the functional $\Lambda_{i}(M,g)$ with respect to \textit{t} exist (see Berger~\cite{b1}, Bando and Urakawa~\cite{bu1}, El Souf\/i and Ilias~\cite{si2}).

\begin{definition}[\cite{si1,si2, n1}] A Riemannian metric $g_{0}$ on a closed surface $M$ is called an extremal metric for the functional $\Lambda_{i}(M, g)$ if for any analytic deformation $g_{t}$ the following inequality holds
\begin{gather*}
\frac{d}{dt}\Lambda_{i}(M,g_{t})\bigg|_{t=0+}\cdot\frac{d}{dt}\Lambda_{i}(M,g_{t})\bigg|_{t=0-}\le 0.
\end{gather*}
\end{definition}

The mentioned metrics above are extremal since they are global maxima. However, extremal metrics are not necessarily maximal. For example, El Souf\/i and Ilias proved in \cite{si1} that the only extremal metric for $\Lambda_{1}(\mathbb{T}^2,g)$ dif\/ferent from the maximal one is the metric on the Clif\/ford torus.

Jakobson, Nadirashvili and Polterovich proved in \cite{jnp1} that the metric on the Klein bottle realized as the bipolar Lawson surface $\tilde{\tau}_{3,1}$ is extremal for $\Lambda_{1}(\mathbb{KL},g)$. Using this result El Souf\/i, Giacomini and Jazar proved in \cite{sgj1} that this metric is the unique extremal metric.

The following extremal metrics on families of tori and Klein bottles were investigated recently: Lapointe investigated metrics on bipolar Lawson surfaces $\tilde{\tau}_{r,m}\looparrowright\mathbb{S}^{4}$ in~\cite{lapointe}, these surfaces are described below in Section~\ref{section2}; Penskoi investigated extremal metrics on Lawson surfaces $\tau_{m,n}\looparrowright\mathbb{S}^{3}$ and on Otsuki tori $O_{\frac{p}{q}}\looparrowright\mathbb{S}^{3}$ in~\cite{penskoi2} and~\cite{penskoi3} respectively; Karpukhin investigated metrics on bipolar Otsuki tori $\tilde{O}_{\frac{p}{q}}\looparrowright\mathbb{S}^{4}$ and on a family of tori $M_{m,n}\looparrowright\mathbb{S}^{5}$ in~\cite{kar1} and~\cite{kar2} respectively; and Karpukhin proved that the metrics on $\tau_{m,n}$, $\tilde{\tau}_{r,m}$, $O_{\frac{p}{q}}$, $\tilde{O}_{\frac{p}{q}}$, and $M_{m,n}$ are not maximal except metrics on $M_{1,1}$ (the equilateral torus) and $\tilde{\tau}_{3,1}$ in~\cite{kar3}. Here $\looparrowright$ denotes an immersion.

As it was mentioned above, in~\cite{penskoi1}, Penskoi introduced the new three-parametric family~$T_{a,b,c}$ of minimal surfaces in spheres, generalizing Lawson tau-surfaces, and investigated their extremal spectral properties.

\begin{theorem}[Penskoi,~\cite{penskoi1}]\label{theorem2}  Let $F_{a,b,c}\colon \mathbb{R}^2\rightarrow\mathbb{S}^{5}\subset\mathbb{R}^{6}$ be a~three-parametric doubly-periodic immersion of the plane to the $5$-dimensional sphere of radius~$1$ defined by the formula~\eqref{Tabcexplicit}, where
\begin{itemize}\itemsep=0pt
\item[$a)$] either $a$, $b$, $c$ are integers and $|c|>\sqrt{a^2+b^2}$,
\item[$b)$] or $a$, $b$ are nonzero integers and $|c|=\sqrt{a^2+b^2}$.
\end{itemize}

 Then the following statements hold:
\begin{enumerate}\itemsep=0pt
\item[$1)$] The image $T_{a,b,c}=F_{a,b,c}(\mathbb{R}^2)$ is a minimal compact surface in the $5$-dimensional sphere~$\mathbb{S}^5$.

\item[$2)$]  The case $b)$ corresponds to Lawson tau-surfaces $\tau_{a,b}\cong T_{a,b,\sqrt{a^2+b^2}}$. Distinct Lawson tau-surfaces correspond to unordered pairs $a,b\geqslant1$ such that $(a,b)=1$. The surface $T_{a,b,\sqrt{a^2+b^2}}$ is a Lawson torus $\tau_{a,b}$ if $a$ and $b$ are odd and $T_{a,b,\sqrt{a^2+b^2}}$ is a Lawson Klein bottle $\tau_{a,b}$ if either $a$ or $b$ is even, where we assume $(a,b)=1$.

\item[$3)$]  In the case $b)$ the metric induced on $\tau_{a,b}\cong T_{a,b,\sqrt{a^2+b^2}}$ is extremal for the functionals $\Lambda_j(\mathbb{T}^2,g)$ if $\tau_{a,b}$ is a Lawson torus or $\Lambda_j(\mathbb{KL},g)$ if $\tau_{a,b}$ is a Lawson Klein bottle, where $j=2\left[\frac{\sqrt{a^2+b^2}}{2}\right]+a+b-1$ and $[\cdot]$ denotes the integer part. The corresponding value of the functional is $\Lambda_j(\tau_{a,b})=8\pi aE\left(\frac{\sqrt{a^2-b^2}}{a}\right)$.

\item[$4)$]  In the case $a)$ for an integer $k\geqslant1$ one has $T_{a,b,c}=T_{ka,kb,kc}$. Moreover, $T_{-a,b,c}$, $T_{a,-b,c}$ $T_{a,b,-c}$ and $T_{b,a,c}$ are isometric to $T_{a,b,c}$. Hence, it is sufficient to consider non-negative integer $a$, $b$, $c$ satisfying conditions $a)$ such that $(a,b,c)=1$ and assume that $(a,b,c)$ and $(b,a,c)$ are equivalent.
\end{enumerate}
\end{theorem}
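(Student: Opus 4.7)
The plan is to establish the four parts of Theorem~\ref{theorem2} in order, with the analytic content concentrated in part~$1)$ and the most substantial spectral bookkeeping in part~$3)$.

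For part~$1)$, I would first verify that the expressions under the square roots in~\eqref{Tabcexplicit} are non-negative throughout ranges~a) and~b), so that $F_{a,b,c}$ is real-analytic, and confirm directly that $|F_{a,b,c}|^2=1$ so that the image lies in~$\mathbb{S}^5$. Double-periodicity in $x$ and $y$ together with the integrality of $a,b,c$ then makes $T_{a,b,c}$ compact. For minimality I would invoke Takahashi's theorem: an isometric immersion $F\colon M\to\mathbb{S}^{n-1}\subset\mathbb{R}^n$ is minimal if and only if $\Delta_g F=\lambda F$ componentwise for a single constant $\lambda$, where $g=F^{*}g_{\mathbb{S}^{n-1}}$. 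Because each complex component of $F_{a,b,c}$ factors as an $x$-exponential times a $y$-dependent amplitude, the pullback metric has the separated form $g=E(y)\,dx^2+G(y)\,dy^2$; the Laplace--Beltrami operator is then simple enough that one can verify the eigenvalue equation on each component in turn. This direct computation, while mechanical, is the step I expect to be the main obstacle, since it involves manipulating the three square-root coefficients simultaneously and checking that the induced eigenvalues on all three complex components agree.

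For part~$2)$, substituting $c^2=a^2+b^2$ into~\eqref{Tabcexplicit} annihilates the third radical and simplifies the first two to~$1$, so $F_{a,b,c}$ collapses to $(e^{iax}\sin y,e^{ibx}\cos y,0)\in\mathbb{C}^2\times\{0\}$. Identifying $\mathbb{C}^2\cong\mathbb{R}^4$ and applying the coordinate change $y\mapsto\pi/2-y$ converts this map into the Lawson parametrization~\eqref{immersion} of $\tau_{a,b}$. The torus versus Klein bottle dichotomy then follows from analysing the period lattice of $\Psi_{a,b}$: whether both integers are odd, or exactly one is even, determines whether the fundamental domain quotient is orientable.

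For part~$3)$, Takahashi's theorem ensures that the components of the minimal immersion $\tau_{a,b}\looparrowright\mathbb{S}^3$ are eigenfunctions of $\Delta_g$ with eigenvalue~$2$. By the \emph{El Soufi--Ilias} extremality criterion it then suffices to identify the index $j$ such that $\lambda_j(\tau_{a,b},g)=2$ in the ordered spectrum. Since the metric lifts to a conformally flat metric on $\mathbb{R}^2$, the full spectrum can be obtained by separation of variables $u(x,y)=e^{ikx}\varphi(y)$, reducing to a Sturm--Liouville problem in $y$; counting eigenvalues lying strictly below~$2$ with multiplicities, and imposing the relevant torus or Klein bottle quotient on the admissible~$k$, yields the claimed value $j=2[\sqrt{a^2+b^2}/2]+a+b-1$. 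The area is computed by integrating $\sqrt{EG}$ over a fundamental domain, and the resulting integral reduces via a trigonometric substitution to the complete elliptic integral of the second kind, producing $8\pi a E(\sqrt{a^2-b^2}/a)$.

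For part~$4)$, the equality $T_{ka,kb,kc}=T_{a,b,c}$ follows from the reparametrization $x\mapsto x/k$, since the radical coefficients are homogeneous of degree~$0$ in $(a,b,c)$ and only the exponentials $e^{iax}$, $e^{ibx}$, $e^{icx}$ scale. The sign-flip isometries $T_{-a,b,c}$, $T_{a,-b,c}$, $T_{a,b,-c}\cong T_{a,b,c}$ come from complex conjugation of individual components or from $x\mapsto-x$, while $T_{b,a,c}\cong T_{a,b,c}$ comes from swapping the first two complex components together with the isometry $y\mapsto\pi/2-y$ of the parametrizing plane.
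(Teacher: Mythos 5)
First, a point of scope: Theorem~\ref{theorem2} is stated in this paper with explicit attribution to Penskoi~\cite{penskoi1} and is used purely as imported background; the paper contains no proof of it, so there is no internal argument to compare your proposal against. Judged against the strategy of the cited source, your outline is consistent for parts~1), 2) and~4): minimality via Takahashi's theorem after checking $|F_{a,b,c}|^2=1$ and exploiting the separated form $g=E(y)\,dx^2+G(y)\,dy^2$ of the induced metric; the collapse of~\eqref{Tabcexplicit} to $(e^{iax}\sin y,\,e^{ibx}\cos y,\,0)$ when $c^2=a^2+b^2$ (here a plain swap of the two complex components already identifies this with $\Psi_{b,a}$, so the reflection $y\mapsto\pi/2-y$ is not even needed); and the homogeneity, conjugation and swap isometries in part~4). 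These steps are correct and essentially forced by the explicit formula.

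The genuine gap is in part~3). You reduce extremality to the El Soufi--Ilias criterion and then assert that ``counting eigenvalues lying strictly below $2$ with multiplicities \dots\ yields the claimed value $j$.'' That count \emph{is} the content of the statement and cannot be waved through. After separation of variables $u=e^{ikx}\varphi(y)$ one obtains a one-parameter family of periodic Sturm--Liouville problems of Lam\'e type, and one must (i)~determine for each admissible $k$ how many eigenvalues of the reduced operator lie below~$2$, (ii)~restrict to the eigenfunctions that actually descend to the torus or Klein bottle, which depends on the parities of $a$ and $b$ and, in the Klein bottle case, on invariance under the deck transformation, and (iii)~assemble these counts into the closed form $j=2\left[\sqrt{a^2+b^2}/2\right]+a+b-1$. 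None of these steps is indicated, and this is precisely where the difficulty in~\cite{penskoi1} (and in~\cite{penskoi2}, where the Lawson case is treated via the Lam\'e equation) resides. As a plan your sketch points in the right direction, but as a proof it is missing its central argument.
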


The second result of the present paper concerns the maximality of metrics induced on $T_{a,b,c}$.

\begin{theorem}\label{theorem3}
All metrics induced on $T_{a,b,c}$ are not maximal except for the metric of $\tilde{\tau}_{3,1}$ and the metric of the equilateral torus.
\end{theorem}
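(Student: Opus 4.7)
The plan is to combine Theorem~\ref{theorem1} just proven with Karpukhin's non-maximality results from~\cite{kar3}. By Theorem~\ref{theorem2}(4), it suffices to restrict attention to non-negative integers $a,b,c$ with $(a,b,c)=1$, up to swapping $a$ and $b$. The entire argument is a case analysis meant to reduce every admissible $T_{a,b,c}$ to a member of a family whose extremal/maximal spectral properties have already been settled in~\cite{kar3}.

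The first case, case (b) of Theorem~\ref{theorem2} with $|c|=\sqrt{a^2+b^2}$, is immediate: $T_{a,b,c}$ coincides by construction with the Lawson tau-surface $\tau_{a,b}$, and~\cite{kar3} asserts that no metric on $\tau_{m,n}$ is maximal. In case (a), with $|c|>\sqrt{a^2+b^2}$, I would split further according to whether $b=0$ (the situation $a=0$ is symmetric). When $b=0$, Theorem~\ref{theorem1} identifies $T_{a,0,c}$ up to isometry with a bipolar Lawson surface $\tilde{\tau}_{r,m}$; a routine converse check - given coprime $0\le a<c$, solve for coprime positive integers $r>m$ so that either $(r-m,r+m)=(a,c)$ or $(r-m,r+m)=(2a,2c)$, landing in exactly one of the three cases of Theorem~\ref{theorem1} according to the parities of $r,m$ - shows every admissible $T_{a,0,c}$ arises this way. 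Then~\cite{kar3} delivers non-maximality for bipolar Lawson surfaces, with the lone exception $\tilde{\tau}_{3,1}\cong T_{1,0,2}$.

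The remaining case, $|c|>\sqrt{a^2+b^2}$ with $a,b\ne 0$, is the most substantial. My plan here is to identify these $T_{a,b,c}$ with Karpukhin's family $M_{m,n}\looparrowright\mathbb{S}^5$ of~\cite{kar2}; combined with~\cite{kar3}, which singles out $M_{1,1}$ (the equilateral torus) as the only $M_{m,n}$ carrying a maximal metric, this disposes of the case. The main obstacle lies precisely in this identification step: one must read off the induced flat metric and the underlying period lattice from the explicit parametrization~\eqref{Tabcexplicit} and match them against the corresponding data for $M_{m,n}$ via an appropriate linear reparametrization of the independent variables $(x,y)$, tracking how the three parameters $(a,b,c)$ correspond to $(m,n)$ and to the overall conformal scale. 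This computation is in spirit analogous to the metric matching used to prove Theorem~\ref{theorem1}, but must be carried out in the full three-parameter regime rather than the $b=0$ slice, and this is where the real labour of Theorem~\ref{theorem3} is concentrated.
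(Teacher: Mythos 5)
Your first two cases track the paper exactly: $|c|=\sqrt{a^2+b^2}$ gives Lawson tau-surfaces, and $b=0$ with $|c|>\sqrt{a^2+b^2}$ gives (by Theorem~\ref{theorem1} together with the converse parity check you sketch, which is correct) exactly the bipolar Lawson surfaces; both families are then handled by Karpukhin's nonmaximality results in \cite{kar3}, with the exception $\tilde{\tau}_{3,1}\cong T_{1,0,2}$. The paper additionally disposes of $T_{0,0,1}$ (the Clifford torus), which you omit, but that is minor.

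The genuine gap is your third case, which is where all the content of Theorem~\ref{theorem3} lies: the surfaces $T_{a,b,c}$ with $abc\neq0$ and $|c|>\sqrt{a^2+b^2}$ do \emph{not} reduce to Karpukhin's family $M_{m,n}$. After the normalizations of Theorem~\ref{theorem2}(4) this is still a genuinely three-parametric family (for each fixed pair $(a,b)$ with $ab\neq 0$ there are infinitely many admissible $c$), whereas $M_{m,n}$ is two-parametric, so the proposed identification can hold at most on a thin subfamily; indeed $M_{1,1}\cong T_{1,1,2}$ is the only member of the $M$-family identified inside $T_{a,b,c}$, and surfaces such as $T_{1,2,4}$ or $T_{1,1,3}$ belong to no previously studied family, which is precisely why a new argument is needed. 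The paper's actual route is a direct quantitative estimate and attempts no identification: using only $E(k)\le\pi/2\le K(k)$ it bounds the area from \eqref{Sabc} by $S(a,b,c)<2\pi^2(a+b+c)$ (Proposition~\ref{proposition3}); since the index $j$ of the functional for which the metric on $T_{a,b,c}$ is extremal grows linearly in $a+b+c$, Karpukhin's lower bounds $\sup\Lambda_n(\mathbb{T}^2,g)\ge 8\pi\bigl(n-1+\pi/\sqrt3\bigr)$ and $\sup\Lambda_n(\mathbb{KL},g)\ge 8\pi(n-1)+12\pi E\bigl(2\sqrt2/3\bigr)$ (Proposition~\ref{proposition1}) then exceed $\Lambda_j(T_{a,b,c})$ once $a+b+c\ge 11$ (resp.\ $\ge 6$), and the finitely many remaining triples are checked against tables of elliptic integrals. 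None of this appears in your proposal, so as written the main case is unproved, and the strategy you propose for it would not succeed.
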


The paper is organized in the following way. In Section~\ref{section2} we recall a construction of bipolar Lawson tau-surfaces following Lapointe's paper~\cite{lapointe}. In Section~\ref{section3} we provide a~proof of Theorem~\ref{theorem1}. Finally, nonmaximality of metrics on $T_{a,b,c}$ is shown in Section~\ref{section4}.

\section{Construction of bipolar Lawson surfaces}
\label{section2}

Let us now recall the construction of bipolar Lawson surface $\tilde{\tau}_{r,m}$ following Lapointe's paper~\cite{lapointe}. The Lawson surface $\tau_{r,m}$, with $r>m>0$ and $(r,m)=1$, is minimally immersed into $\mathbb{S}^{3}$ by $I\colon \mathbb{R}^2\rightarrow\mathbb{R}^{4}$, where
\begin{gather*}
I(u,v)=(\cos ru\cos v, \sin ru\cos v, \cos mu\sin v, \sin mu\sin v).
\end{gather*}

The bipolar minimal surface $\tilde{\tau}_{r,m}$ of $\tau_{r,m}$ is the image of an exterior product of $I$ and $I^{*}$, where $I^{*}$ is a unit vector normal to~$\tau_{r,m}$ and tangent to~$\mathbb{S}^{3}$,
\begin{gather*}
I^{*}(u,v)=\frac{(m \sin ru\sin v, -m \cos ru\sin v, -r \sin mu\cos v, r \cos mu\cos v)}{\sqrt{r^2\cos^2 v+m^2\sin^2 v}}.
\end{gather*}

The explicit formula for $\tilde{I}=I\wedge I^{*}\colon \mathbb{R}^2\rightarrow\mathbb{S}^{5}\subset\mathbb{R}^{6}$ is then
\begin{gather*}
\tilde{I}=\frac{1}{\sqrt{r^2\cos^2 v+m^2\sin^2 v}}\begin{pmatrix} -m\sin v\cos v\\ r\sin v\cos v\\ -r\cos^2 v\sin mu\cos ru-m\sin^2 v\sin ru\cos mu\\ r\cos^2 v\cos mu\sin ru+m\sin^2 v\cos ru\sin mu\\ -r\cos^2 v\sin mu\sin ru+m\sin^2 v\cos ru\cos mu\\ r\cos^2 v\cos mu\cos ru-m\sin^2 v\sin ru\sin mu \end{pmatrix}.
\end{gather*}

It is known that $\tilde{\tau}_{r,m}$ actually lies in $\mathbb{S}^{4}$, seen as an equator of~$\mathbb{S}^{5}$.

In~\cite{lapointe} Lapointe proved that for the bipolar surface $\tilde{\tau}_{r,m}$ of a Lawson torus or Klein bottle~$\tau_{r,m}$:
\begin{enumerate}\itemsep=0pt
\item[(1)] if $rm\equiv 0\!\pmod{2}$, $\tilde{\tau}_{r,m}$ is a torus with an extremal metric for $\Lambda_{4r-2}$,

\item[(2)] if $rm\equiv 1\!\pmod{4}$, $\tilde{\tau}_{r,m}$ is a torus with an extremal metric for $\Lambda_{2r-2}$,

\item[(3)] if $rm\equiv 3\!\pmod{4}$, $\tilde{\tau}_{r,m}$ is a Klein bottle with an extremal metric for $\Lambda_{r-2}$.
\end{enumerate}

The value of functional $\Lambda_{i}(\tilde{\tau}_{r,m})$ can be calculated as follows \cite{lapointe}:
\begin{enumerate}\itemsep=0pt
\item[(1)] if $rm\equiv 0\!\pmod{2}$, $\Lambda_{4r-2}(\tilde{\tau}_{r,m})=16\pi rE\left(\frac{\sqrt{r^2-m^2}}{r}\right)$,

\item[(2)] if $rm\equiv 1\!\pmod{4}$, $\Lambda_{2r-2}(\tilde{\tau}_{r,m})=8\pi rE\left(\frac{\sqrt{r^2-m^2}}{r}\right)$,

\item[(3)] if $rm\equiv 3\!\pmod{4}$, $\Lambda_{r-2}(\tilde{\tau}_{r,m})=4\pi rE\left(\frac{\sqrt{r^2-m^2}}{r}\right)$.
\end{enumerate}

\section{Proof of Theorem~\ref{theorem1}}
\label{section3}

Let us prove that the surface $T_{a,b,c}$ where $a=r-m, b=0$, and $c=r+m$ is isometric to the bipolar Lawson surface $\tilde{\tau}_{r,m}$ when $rm\equiv 0\!\pmod{2}$.

The induced metric $g$ on $T_{a,b,c}$ and $\tilde{g}$ on $\tilde{\tau}_{r,m}$ are given by the formulas~\cite{lapointe,penskoi1}
\begin{gather*}
g=\frac{1}{2}\big(c^2+\big(b^2-a^2\big)\cos 2y\big)dx^2+ \frac{c^2+\big(b^2-a^2\big)\cos 2y}{2c^2-a^2-b^2+\big(b^2-a^2\big)\cos 2y}dy^2,
\\
\tilde{g}=\frac{\big(r^2-\big(r^2-m^2\big)\sin^2 v\big)^2+r^2 m^2}{r^2-\big(r^2-m^2\big)\sin^2 v}\left(du^2+\frac{dv^2}{r^2-(r^2-m^2)\sin^2 v}\right).
\end{gather*}

The task is to f\/ind the change of variable between metrics $g$ and $\tilde{g}$. Let $a=r-m$, $b=0$, $c=r+m$ and apply the change of variable $\sin y=\operatorname{sn}(z,k)$, where $k=\frac{a}{\sqrt{a^2-c^2}}$ and $\operatorname{sn}(z,k)$ is a~Jacobi elliptic function~\cite{ellipticbook}. This implies
\begin{gather*}
g=\frac{1}{2}\big(c^2-a^2+2a^2 \operatorname{sn}^2(z,k)\big)\left(dx^2+ \frac{dz^2}{c^2-a^2}\right).
\end{gather*}
Use the following transformation
\begin{gather}
\label{transform1}
H_{1}(x,z)=\left(u,\frac{2\sqrt{c^2-a^2}}{a+c}w+K(k)\right).
\end{gather}
This implies that
\begin{gather*}
g=\left(2rm+(r-m)^2\frac{1-\operatorname{sn}^2\big(2\sqrt{\frac{m}{r}}w,k\big)}{1-k^2\operatorname{sn}^2\big(2\sqrt{\frac{m}{r}}w,k\big)}\right)
\left(du^2+\frac{dw^2}{r^2}\right)
\\
\hphantom{g=} =\frac{\big(r^2-\big(r^2-m^2\big)\operatorname{sn}^2\big(w,\frac{\sqrt{r^2-m^2}}{r}\big)\big)^2+(rm)^2}{r^2-\big(r^2-m^2\big)
\operatorname{sn}^2\big(w,\frac{\sqrt{r^2-m^2}}{r}\big)} \left(du^2+\frac{dw^2}{r^2}\right),
\end{gather*}
which for $k'=\sqrt{1-k^2}$, used the identities~(13.17), (13.22), (13.23) from~\cite{ellipticbook}:
\begin{alignat*}{3}
& \operatorname{cn}^2(t,k)=1-\operatorname{sn}^2(t,k) ,\qquad &&
\operatorname{sn}\left(k'u,\frac{ik}{k'}\right)=k'\frac{\operatorname{sn}(u,k)}{\operatorname{dn}(u,k)},&\\
& \operatorname{dn}^2(t,k)=1-k^2\operatorname{sn}^2(t,k) ,\qquad && \operatorname{sn}\left((1+k')u,\frac{1-k'}{1+k'}\right)=(1+k')\frac{\operatorname{sn}(u,k) \operatorname{cn}(u,k)}{\operatorname{dn}(u,k)} .&
\end{alignat*}

Now apply change of variable $\sin v=\operatorname{sn}(w,\tilde{k})$, where $\tilde{k}=\frac{\sqrt{r^2-m^2}}{r}$, implying
\begin{gather*}
g=\frac{\big(r^2-\big(r^2-m^2\big)\sin^2 v\big)^2+r^2 m^2}{r^2-\big(r^2-m^2\big)\sin^2 v}\left(du^2+\frac{dv^2}{r^2-\big(r^2-m^2\big)\sin^2 v}\right)=\tilde{g}.
\end{gather*}

When $rm\equiv 0\!\pmod{2}$, $a=r-m$ and $c=r+m$ are both odd since $(r,m)=1$. We have that $T_{a,b,c}$ is a torus and $\tilde{F}_{a,b,c}\colon \mathbb{R}^2/\mathcal{L}\rightarrow T_{a,b,c}$, where $\mathcal{L}=\{(2\pi n,2\pi m)\,|\,n,m\in \mathbb{Z}\}$, is a one-to-one map~\cite{penskoi1}. Apply change of variable $\sin y=\operatorname{sn}(z,k)$, and we alternatively have $\hat{F}_{a,b,c}\colon \mathbb{R}^2/\mathcal{\hat{L}}\rightarrow T_{a,b,c}$, where $\mathcal{\hat{L}}$ $=\{(2n\pi,4mK(k))\,|\,n,m\in \mathbb{Z}\}$. There is now a one-to-one correspondence between the rectangle~$[0,2\pi)\times [K(k),5K(k))$ and $T_{a,b,c}$. Our linear transformation~\eqref{transform1} maps this rectangular domain as follows {\samepage
\begin{gather*}
 H_{1}\left([0,2\pi)\times [K(k),5K(k))\right)=[0,2\pi)\times \left[0,\frac{2(a+c)K(k)}{\sqrt{c^2-a^2}}\right)=[0,2\pi)\times \big[0,2K\big(\tilde{k}\big)\big),
\end{gather*}
which used the identities $K(k)=\frac{1}{k'}K(\frac{i k}{k'})$, $K(\tilde{k})=\frac{2}{1+\tilde{k}'}K\big(\frac{1-\tilde{k}'}{1+\tilde{k}'}\big)$. }

Let us remark that when $rm\equiv 0\!\pmod{2}$ and after change of variable $\sin v=\operatorname{sn}(w,\tilde{k})$, the bipolar Lawson torus $\tilde{\tau}_{r,m}$ has a one-to-one correspondence with $[0,2\pi)\times [0,2K(\tilde{k}))$.

Thus we obtained the required isometry.

We recall now the explicit def\/inition of $S(a,b,c)$ \cite{penskoi1}:
\begin{gather}
\label{Sabc}
S(a,b,c)=\frac{4\pi}{\sqrt{c^2-a^2}}\left(2\big(c^2-a^2\big)E\left(\sqrt{\frac{b^2-a^2}{c^2-a^2}}\right)-
\big(c^2-a^2-b^2\big)K\left(\sqrt{\frac{b^2-a^2}{c^2-a^2}}\right)\right),
\end{gather}
where $K(k)$, $E(k)$ are the complete elliptic integrals of the f\/irst, second kind~\cite{ellipticbook}.

Using the values $\Lambda_{2(a+c)-2}(T_{a,b,c})$ and $\Lambda_{4r-2}(\tilde{\tau}_{r,m})$ from~\cite{penskoi1} and~\cite{lapointe}, which are twice the areas of these surfaces, we can check that the areas of corresponding surfaces are the same. Note that $S(a,b,c)=S(b,a,c)$ since $T_{b,a,c}\cong T_{a,b,c}$,
\begin{gather*}
\Lambda_{2(a+c)-2}(T_{a,0,c})=2S(0,a,c)=\frac{8\pi}{c}\left[2c^2 E\left(\frac{a}{c}\right)-\big(c^2-a^2\big)K\left(\frac{a}{c}\right)\right] \\
\hphantom{\Lambda_{2(a+c)-2}(T_{a,0,c})}{}
=8\pi(a+c) E\left(\frac{2\sqrt{ac}}{a+c}\right)=\Lambda_{4r-2}(\tilde{\tau}_{r,m}),
\end{gather*}
which uses the identity $E\big(\frac{2\sqrt{k}}{1+k}\big)=\frac{2E(k)-(1-k^2)K(k)}{1+k}$.

We now consider the cases when $rm\equiv 1\!\pmod{4}$ and $rm\equiv 3\!\pmod{4}$.
For the case of $rm\equiv 1\!\pmod{4}$, we have that $\tilde{\tau}_{r,m}$ is a bipolar Lawson torus, and it is isometric to the surfa\-ce~$T_{a,b,c}$, where $a=\frac{r-m}{2}$, $b=0$, and $c=\frac{r+m}{2}$.

Remark that the change of variable on $y$ and $v$ are the same as in the case $rm\equiv 0\!\pmod{2}$. The method of proof is the same using the transformation
\begin{equation}
\label{transform2}
H_{2}(x,z)=\left(2u,\frac{2\sqrt{c^2-a^2}}{a+c}w+K(k)\right).
\end{equation}

When $rm\equiv 1\!\pmod{4}$, $a=\frac{r-m}{2}$ is even and $c=\frac{r+m}{2}$ is odd. We have that $T_{a,b,c}$ is again a torus and $\tilde{F}_{a,b,c}\colon \mathbb{R}^2/\mathcal{L}\rightarrow T_{a,b,c}$ is a one-to-one map~\cite{penskoi1}. Let us again use the rectangle $[0,2\pi)\times [K(k),5K(k))$ and $\hat{F}_{a,b,c}\colon \mathbb{R}^2/\mathcal{\hat{L}}\rightarrow T_{a,b,c}$, where $\mathcal{\hat{L}}$ $=\{(2n\pi,4mK(k))\,|\,n,m\in \mathbb{Z}\}$. Now, our linear transformation~\eqref{transform2} maps this rectangular domain as follows
\begin{gather*}
H_{2}\big([0,2\pi)\times [K(k),5K(k))\big)=[0,\pi)\times \left[0,\frac{2(a+c)K(k)}{\sqrt{a^2-c^2}}\right)=[0,\pi)\times \big[0,2K\big(\tilde{k}\big)\big) .
\end{gather*}

When $rm\equiv 1\!\pmod{4}$ and after change of variable $\sin v=\operatorname{sn}(w,\tilde{k})$, the bipolar Lawson torus $\tilde{\tau}_{r,m}$ has a~one-to-one correspondence with $[0,\pi)\times [0,2K(\tilde{k}))$.

Thus we obtained the required isometry.

Now, for the case of $rm\equiv 3\!\pmod{4}$, we have that $\tilde{\tau}_{r,m}$ is a bipolar Lawson Klein bottle, and it is isometric to $T_{a,b,c}$, where $a=\frac{r-m}{2}$, $b=0$, and $c=\frac{r+m}{2}$.

Remark that we need the same change of variable on $y$ and $v$ and transformation~\eqref{transform2} as the case $rm\equiv 1\!\pmod{4}$.

When $rm\equiv 3\!\pmod{4}$, $a=\frac{r-m}{2}$ is odd and $c=\frac{r+m}{2}$ is even. We have that $T_{a,b,c}$ is a Klein bottle and $\tilde{F}_{a,b,c}\colon \mathbb{R}^2/\mathcal{L}\rightarrow T_{a,b,c}$ is a double covering~\cite{penskoi1}. With change of variable $\sin y=\operatorname{sn}(z,k)$ we now have as before a one-to-one correspondence between the rectangle $[0,\pi)\times [K(k),5K(k))$ and $T_{a,b,c}$. Our linear transformation~\eqref{transform2} maps this rectangular domain as follows
\begin{gather*}
H_{2}\big([0,\pi)\times [K(k),5K(k))\big)=\left[0,\frac{\pi}{2}\right)\times \left[0,\frac{2(a+c)K(k)}{\sqrt{a^2-c^2}}\right)
=\left[0,\frac{\pi}{2}\right)\times \big[0,2K\big(\tilde{k}\big)\big).
\end{gather*}

When $rm\equiv 3\!\pmod{4}$ and after change of variable $\sin v=\operatorname{sn}(w,\tilde{k})$, the bipolar Lawson Klein bottle $\tilde{\tau}_{r,m}$ has a one-to-one correspondence with $\big[0,\frac{\pi}{2}\big)\times [0,2K(\tilde{k}))$.

This completes the proof of Theorem~\ref{theorem1}.

\section{Proof of Theorem~\ref{theorem3}}
\label{section4}

In~\cite{kar3}, Karpukhin studied nonmaximality of known extremal metrics including metrics on Otsuki tori $O_{\frac{p}{q}}$, Lawson tori and Klein bottles~$\tau_{r,m}$, generalized tori~$M_{r,m}$, bipolar Lawson surfaces~$\tilde{\tau}_{r,m}$, and bipolar Otsuki tori~$\tilde{O}_{\frac{p}{q}}$. Although the family $T_{a,b,c}$ of tori and Klein bottles include the families $\tau_{r,m}$, $M_{r,m}$ and now $\tilde{\tau}_{r,m}$, there has been no general study of nonmaximality of metrics on~$T_{a,b,c}$. We prove that there are no maximal metrics among $T_{a,b,c}$ apart from the equilateral torus $T_{1,1,2}\cong M_{1,1}$ and Klein bottle $T_{1,0,2}\cong\tilde{\tau}_{3,1}$.

\begin{proposition}[Karpukhin, \cite{kar3}]\label{proposition1} The following inequalities hold
\begin{gather*}
\sup \Lambda_{n}\big(\mathbb{T}^2,g\big)\geq 8\pi\left(n-1+\frac{\pi}{\sqrt{3}}\right), \\
\sup \Lambda_{n}(\mathbb{KL},g)\geq 8\pi(n-1)+12\pi E\left(\frac{2\sqrt{2}}{3}\right),
\end{gather*}
where $E(k)$ is the complete elliptic integral of the second kind~{\rm \cite{ellipticbook}}.
\end{proposition}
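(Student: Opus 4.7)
The plan is to reduce both bounds to the known sharp values of $\sup\Lambda_1$ on the torus and on the Klein bottle, combined with an iterated ``gluing'' inequality of Colbois--El~Soufi type. The first ingredient is the theorem of Nadirashvili that the equilateral flat torus is maximal for $\Lambda_1$ on $\mathbb{T}^2$, giving $\sup\Lambda_1(\mathbb{T}^2,g)=\frac{8\pi^2}{\sqrt{3}}$; the second is the theorem of Jakobson--Nadirashvili--Polterovich combined with El~Soufi--Giacomini--Jazar, which asserts that the metric of the bipolar Lawson Klein bottle $\tilde{\tau}_{3,1}$ is the unique maximal metric for $\Lambda_1$ on $\mathbb{KL}$. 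Evaluating the Lapointe formula from Section~\ref{section2} at $r=3$, $m=1$ (the $rm\equiv 3\pmod 4$ case) gives $\sup\Lambda_1(\mathbb{KL},g)=\Lambda_1(\tilde{\tau}_{3,1})=12\pi E\left(\frac{2\sqrt{2}}{3}\right)$, which is exactly the constant appearing in the statement.

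The second ingredient is the general inequality
\[
\sup\Lambda_n(M,g) \;\geq\; \sup\Lambda_{n-1}(M,g) + 8\pi
\]
for any closed surface $M$, with the constant $8\pi=\Lambda_1(\mathbb{S}^2,g_{\mathrm{round}})$. The standard way to establish this is to take an almost-maximizing metric for $\Lambda_{n-1}(M,g)$, glue to $M$ a small round two-sphere via a thin cylindrical neck, and use the min-max characterization of $\lambda_n$: test functions supported on the bubble produce one additional eigenvalue essentially equal to $\lambda_1$ of the sphere, while the first $n-1$ eigenvalues remain $o(1)$-close to those of $(M,g)$ by a standard cut-off argument. After rescaling and letting the neck shrink, the area contribution of the bubble is negligible and in the limit one gains the full $\Lambda_1(\mathbb{S}^2)=8\pi$ without losing anything on the original surface.

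Granted this inequality, iterating $n-1$ times and using the two base cases above yields
\[
\sup\Lambda_n(\mathbb{T}^2,g)\;\geq\;\sup\Lambda_1(\mathbb{T}^2,g)+8\pi(n-1)\;=\;8\pi\left(n-1+\frac{\pi}{\sqrt{3}}\right)
\]
and
\[
\sup\Lambda_n(\mathbb{KL},g)\;\geq\;\sup\Lambda_1(\mathbb{KL},g)+8\pi(n-1)\;=\;12\pi E\left(\frac{2\sqrt{2}}{3}\right)+8\pi(n-1),
\]
which are the two asserted bounds. I expect the main obstacle to lie in the clean verification of the gluing inequality: one must show that the sphere bubble can be attached so that the first $n-1$ eigenvalues of the glued object are perturbed by only $o(1)$, which demands a careful Rayleigh-quotient analysis with suitable cut-offs supported away from the degenerating neck. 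Once that inequality is in hand, the remainder of the argument is a one-line induction.
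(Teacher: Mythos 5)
The paper offers no proof of Proposition~\ref{proposition1}; it is quoted from Karpukhin~\cite{kar3} and noted to follow from Colbois--El~Soufi~\cite{colelso}, and your argument is exactly the one in those references: the sphere-attaching inequality $\sup\Lambda_n(M,g)\geq\sup\Lambda_{n-1}(M,g)+8\pi$ (Colbois--El~Soufi's Theorem~B in dimension two) iterated down to the value of $\Lambda_1$ on the equilateral flat torus and on $\tilde{\tau}_{3,1}$, the latter correctly read off from Lapointe's formula as $12\pi E\bigl(\tfrac{2\sqrt{2}}{3}\bigr)$. One remark: you invoke maximality of these two metrics for $\Lambda_1$ (and slightly misstate El~Soufi--Giacomini--Jazar, who proved uniqueness of the \emph{extremal} metric on $\mathbb{KL}$, not maximality), but since only lower bounds on the suprema are asserted, the trivial inequality $\sup\Lambda_1\geq\Lambda_1(g_0)$ for these explicit metrics is all the induction needs, so this does not affect correctness.
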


This result can also be found by Colbois and El Souf\/i in~\cite[Theorem~B:4]{colelso}. Recent computational evidence~\cite{compu} further suggests that this construction is in fact maximal.  For our purposes, we use Proposition~\ref{proposition1} to show nonmaximality on~$T_{a,b,c}$.

\begin{proposition}\label{proposition2}
Let $a b c \ne 0$, and $|c|>\sqrt{a^2+b^2}$. If $a$ and $b$ have different parity and~$c$ is even, then the following inequality holds
\begin{gather*}
\Lambda_{a+b+c-3}(T_{a,b,c})<8\pi(a+b+c-4)+12\pi E\left(\frac{2\sqrt{2}}{3}\right).
\end{gather*}
If $a$ and $b$ are odd and $c$ is even, then the following inequality holds
\begin{gather*}
\Lambda_{a+b+c-3}(T_{a,b,c})<8\pi\left(a+b+c-4+\frac{\pi}{\sqrt{3}}\right).
\end{gather*}
Otherwise, the following inequality holds
\begin{gather*}
\Lambda_{2(a+b+c)-3}(T_{a,b,c})<8\pi\left(2(a+b+c)-4+\frac{\pi}{\sqrt{3}}\right).
\end{gather*}
\end{proposition}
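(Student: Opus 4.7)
The plan is to upper bound $\Lambda_{j}(T_{a,b,c})$ explicitly in each of the three parity cases and to compare this upper bound with the relevant Karpukhin lower bound from Proposition~\ref{proposition1}. First, I would invoke Penskoi's formulas from~\cite{penskoi1} to identify $\Lambda_{j}(T_{a,b,c})$ at the specified index $j$ with a small integer multiple of the area quantity $S(a,b,c)$ defined in~(\ref{Sabc}); the three sub-cases of the proposition are organized by the parities of $(a,b,c)$, which determine whether $T_{a,b,c}$ is a torus or a Klein bottle and, correspondingly, which of the two Karpukhin inequalities to apply---the torus form $8\pi(n-1+\pi/\sqrt{3})$ or the Klein bottle form $8\pi(n-1)+12\pi E(2\sqrt{2}/3)$.

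Second, I would bound $S(a,b,c)$ from above via the elementary inequalities $E(k)\le\pi/2$ and $K(k)\ge\pi/2$. The hypothesis $|c|>\sqrt{a^{2}+b^{2}}$ makes the coefficient $-(c^{2}-a^{2}-b^{2})$ of $K(k)$ in~(\ref{Sabc}) strictly negative, so both bounds push in the same direction and yield
\begin{equation*}
S(a,b,c)\;\le\;\frac{2\pi^{2}(c^{2}-a^{2}+b^{2})}{\sqrt{c^{2}-a^{2}}}
\;=\;2\pi^{2}\!\left(\sqrt{c^{2}-a^{2}}+\frac{b^{2}}{\sqrt{c^{2}-a^{2}}}\right).
\end{equation*}
Combined with $\sqrt{c^{2}-a^{2}}>b$---which also follows from the hypothesis---this produces a bound on $S(a,b,c)$ linear in $b$ and $c$.

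Third, substituting this estimate into each of the three asserted inequalities reduces the problem to a strict linear inequality in $a,b,c$, which should follow from $a,b\ge 1$, $c\ge 2$, and $c>\sqrt{a^{2}+b^{2}}$. One carries through the arithmetic separately for the three sub-cases, using in each the correct coefficient in front of $S(a,b,c)$ and the correct Karpukhin bound.

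The main obstacle is sharpness. The crude estimates $E(k)\le\pi/2$ and $K(k)\ge\pi/2$ become equalities at $k=0$, precisely when $a=b$, and this is exactly where $S(a,b,c)$ grows fastest relative to $a+b+c$. In particular, the equilateral torus $T_{1,1,2}$ and $\tilde{\tau}_{3,1}\cong T_{1,0,2}$---the two maxima isolated in Theorem~\ref{theorem3}---saturate the bound exactly; the hypothesis $abc\ne 0$ already removes the latter, while the former is the genuine boundary case that no strict inequality can beat. Accordingly, a small number of further small admissible triples $(a,b,c)$ will need to be checked by hand, and for them a sharper elliptic-integral estimate---perhaps a Landen-type identity of the sort used in Section~\ref{section3}---may be required to close the narrow gap left by the elementary bound above.
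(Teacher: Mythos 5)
Your proposal follows essentially the same route as the paper: bound $S(a,b,c)$ from above by $2\pi^2(a+b+c)$ via the elementary estimates $E(k)\le\pi/2$ and $K(k)\ge\pi/2$ together with $\sqrt{c^2-a^2}>b$ (this is exactly the paper's Proposition~\ref{proposition3}), identify $\Lambda_j(T_{a,b,c})$ with the appropriate multiple of $S(a,b,c)$ according to the parity of $(a,b,c)$, reduce each claim to a linear inequality of the form $(4-\pi)(a+b+c)>\mathrm{const}$, and dispose of the finitely many small triples by direct evaluation of the elliptic integrals (the paper uses tables rather than a sharper analytic estimate, and the lists are short: three, six, and one exceptional triples respectively). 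Your observation that $T_{1,1,2}$ saturates the second inequality is accurate and is handled in the paper only by a parenthetical remark setting that triple aside as the equilateral torus.
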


Before proving Proposition~\ref{proposition2}, let us f\/ind an upper bound for $S(a,b,c)$.

\begin{proposition}\label{proposition3}
Let $|c|>\sqrt{a^2+b^2}$. The following inequality holds
\begin{gather*}
S(a,b,c)<2\pi^2(a+b+c).
\end{gather*}
\end{proposition}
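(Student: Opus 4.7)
The strategy is to bound the two elliptic-integral factors in the formula~\eqref{Sabc} for $S(a,b,c)$ by their values at the modulus $k=0$, and then reduce what remains to an elementary algebraic inequality in $a,b,c$. By the symmetry $T_{a,b,c}\cong T_{b,a,c}$ from Theorem~\ref{theorem2}(4) I may assume $0\leq a\leq b$, so that $k=\sqrt{(b^2-a^2)/(c^2-a^2)}$ lies in $[0,1)$ and the coefficient $c^2-a^2-b^2$ appearing in~\eqref{Sabc} is strictly positive by hypothesis. Since $E$ is decreasing and $K$ is increasing on $[0,1)$ with $E(0)=K(0)=\pi/2$, the bounds $E(k)\leq\pi/2$ and $K(k)\geq\pi/2$ substituted into~\eqref{Sabc} give
\begin{gather*}
S(a,b,c)\leq\frac{4\pi}{\sqrt{c^2-a^2}}\left(\pi\bigl(c^2-a^2\bigr)-\tfrac{\pi}{2}\bigl(c^2-a^2-b^2\bigr)\right)=\frac{2\pi^2\bigl((c^2-a^2)+b^2\bigr)}{\sqrt{c^2-a^2}}.
\end{gather*}

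It therefore suffices to prove the purely algebraic inequality $(c^2-a^2)+b^2\leq(a+b+c)\sqrt{c^2-a^2}$. The plan is to split the right-hand side as $(a+c)\sqrt{c^2-a^2}+b\sqrt{c^2-a^2}$ and match the two summands on the left separately. Writing $\sqrt{c^2-a^2}=\sqrt{(c-a)(c+a)}$ and using $c+a\geq c-a\geq 0$ yields $(a+c)\sqrt{c^2-a^2}\geq(c+a)(c-a)=c^2-a^2$, while the inequality $b\sqrt{c^2-a^2}\geq b^2$ is precisely a reformulation of the hypothesis $c^2\geq a^2+b^2$. Summing the two bounds completes the algebraic step, and composing with the elliptic-integral step yields the proposition.

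Strictness survives because $E(k)<\pi/2$ as soon as $k>0$ (i.e.~as soon as $a<b$) and the bound $b\sqrt{c^2-a^2}\geq b^2$ is strict whenever $b>0$ and $|c|>\sqrt{a^2+b^2}$, so the combined chain is strict whenever $(a,b)\neq(0,0)$; the degenerate case $a=b=0$ produces $S=2\pi^2 c$ and is ruled out in any application of interest. I do not anticipate any real obstacle: the only step that takes any thought is spotting the split of $(a+b+c)\sqrt{c^2-a^2}$ into two pieces that each consume exactly one of the two available hypotheses ($a\geq 0$ and $c^2\geq a^2+b^2$); everything else is bookkeeping.
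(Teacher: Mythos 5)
Your proof is correct and follows essentially the same route as the paper: bound $E(k)\le\pi/2$ and $K(k)\ge\pi/2$ in \eqref{Sabc}, then reduce to the algebraic inequality $\sqrt{c^2-a^2}\le a+c$ together with $b\sqrt{c^2-a^2}\ge b^2$ (which is the hypothesis $c^2>a^2+b^2$); your product-form split of $(a+b+c)\sqrt{c^2-a^2}$ is just the paper's two bounds multiplied through by $\sqrt{c^2-a^2}$. You are in fact slightly more careful than the paper about the symmetry $S(a,b,c)=S(b,a,c)$ needed to keep $k$ real and about the equality case $a=b=0$, where the strict inequality genuinely degenerates to $S=2\pi^2c$.
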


\begin{proof} Using the def\/inition \eqref{Sabc} of $S(a,b,c)$, we show the proof when $a$, $b$, $c$ are each nonzero. The instance when $a,b,c$ contains at least one zero will follow as a special case. Here, we now use $a,b,c>0$ without loss of generality. Since $c>\sqrt{a^2+b^2}$ we have that our interval on~$k$ is~$[0,1)$ for both $K(k)$ and $E(k)$ in the inequality above. It is well known \cite{ellipticbook} that $K(k)$ is bounded below by~$\frac{\pi}{2}$, and $E(k)$ is bounded above by~$\frac{\pi}{2}$ on the interval~$[0,1]$. This implies that
\begin{gather*}
S(a,b,c)\le\frac{4\pi}{\sqrt{c^2-a^2}}\left(2\big(c^2-a^2\big)\frac{\pi}{2}-\big(c^2-a^2-b^2\big)\frac{\pi}{2}\right).
\end{gather*}
We now simplify this expression as follows
\begin{gather*}
S(a,b,c)\le 2\pi^2\sqrt{c^2-a^2}+\frac{2b^2\pi^2}{\sqrt{c^2-a^2}}.
\end{gather*}
We have that $\sqrt{c^2-a^2}\le\sqrt{c^2+a^2}\le\sqrt{c^2+2c a+a^2}=\sqrt{(c+a)^2}=(c+a)$. Hence, this implies that
\begin{gather*}
S(a,b,c)\le 2\pi^2(c+a)+\frac{2b^2\pi^2}{\sqrt{c^2-a^2}}.
\end{gather*}
The condition $c>\sqrt{a^2+b^2}$ implies that $\frac{b}{\sqrt{c^2-a^2}}<1$. This now implies
\begin{gather*}
S(a,b,c)<2\pi^2(c+a)+2\pi^2 b.
\end{gather*}
This concludes the proof of Proposition~\ref{proposition3}.
\end{proof}

We recall that Lawson tau-surfaces $\tau_{r,m}$ correspond to $T_{a,b,\sqrt{a^2+b^2}}$. Since $c=\sqrt{a^2+b^2}$, we have the same upper bound for $S(a,b,c)$ with the exception that the inequality is not strict.

\begin{proof}[Proof of Proposition~\ref{proposition2}]
In order to prove the f\/irst inequality we must show
\begin{gather*}
0<8\pi(a+b+c-4)+12\pi E\left(\frac{2\sqrt{2}}{3}\right)-S(a,b,c).
\end{gather*}
Using Proposition~\ref{proposition3}, this implies that we must show
\begin{gather*}
0<8\pi(a+b+c-4)+12\pi E\left(\frac{2\sqrt{2}}{3}\right)-2\pi^2(a+b+c).
\end{gather*}
This expression can be rewritten as
\begin{gather*}
\frac{16-6 E\left(\frac{2\sqrt{2}}{3}\right)}{4-\pi}<(a+b+c).
\end{gather*}
This inequality holds for $a+b+c\geq 11$. For the exceptional cases $(a,b,c)\in\{(1,2,4),(1,2,6)$, $(2,3,4)\}$, one can verify the f\/irst inequality explicitly using the tables of elliptic integrals in the book~\cite{integraltables}.

Similarily, the second inequality holds for $a+b+c\geq 11$. Remark that when $(a,b,c)=(1,1,2)$, we have the well known equilateral torus $T_{1,1,2}\cong M_{1,1}$. For the exceptional cases $(a,b,c)\in\{(1,1,4),(1,1,6),(1,1,8),(1,3,4),(1,3,6)$, $(3,3,4)\}$, one can again verify the second inequality explicitly.

The third inequality holds for $a+b+c\geq 6$. Only one exceptional case of $(a,b,c)=(1,1,3)$ should be verif\/ied for the third inequality explicitly.
\end{proof}

We now observe when exactly one of $a$ or $b$ is zero, while $|c|>\sqrt{a^2+b^2}$. Since $T_{a,b,c}\cong T_{b,a,c}$ when we assume that $S(a,b,c)=S(b,a,c)$, this corresponds to surfaces $T_{a,0,c}$. These surfaces have now been shown isomorphic to the family of bipolar Lawson surfaces $\tilde{\tau}_{r,m}$. Fortunately, these surfaces were investigated by Karpukhin in~\cite{kar3} and nonmaximality was shown for all extremal metrics except the well known bipolar Lawson Klein bottle $T_{1,0,2}\cong \tilde{\tau}_{3,1}$.

In~\cite{penskoi1}, the case of $T_{0,0,1}$ was already investigated to be the Clif\/ford torus but with a metric multiplied by $\frac{1}{2}$. Remark that the condition of $(a,b,c)=1$ prevents any other~$T_{0,0,n}$.

Finally, when $c=\sqrt{a^2+b^2}$, we have seen in Theorem~\ref{theorem2} that this corresponds to Lawson tau-surfaces $\tau_{r,m}\cong T_{a,b,\sqrt{a^2+b^2}}$. These surfaces were also investigated by Karpukhin in~\cite{kar3}, and it was shown that none of the extremal metrics are maximal.

This completes the proof of Theorem~\ref{theorem3}.

\subsection*{Acknowledgements}

This result was obtained during studies of the author at the National Research University~-- Higher School of Economics, Moscow and the author is very grateful for its hospitality.  The author also thanks A.V.~Penskoi for the statement of this problem, many useful discussions and invaluable help in preparing this manuscript.
The research of the author was partially supported by an NSERC Postgraduate Fellowship and by AG Laboratory NRU-HSE, Russian Federation government grant, ag.~11.G34.31.0023.

\pdfbookmark[1]{References}{ref}
\LastPageEnding

\end{document}